\documentclass[a4paper,11pt]{article}
\usepackage{hyperref}
\hypersetup{
    colorlinks=true,
    linktoc=all,
    linkcolor=red,     
    citecolor=blue,     
}
\usepackage{graphicx}
\usepackage{amssymb}
\usepackage{latexsym, bm}
\usepackage{multicol}
\usepackage{indentfirst}
\usepackage{amsfonts}
\usepackage{amsmath}

\usepackage{setspace}
\newenvironment{pf}[1][Proof]{\noindent {\bf #1.}}{\hfill\rule{2mm}{3mm}\par\medskip}

\renewcommand{\arraystretch}{0.9}

\newtheorem{theorem}{Theorem}[section]
\newtheorem{lemma}{Lemma}[section]

\newtheorem{corollary}{Corollary}[section]

\newtheorem{definition}{Definition}[section]

\newtheorem{remark}{Remark}[section]

\newcommand{\ignore}[1]{}
\newcommand{\srg}{\operatorname{srg}}

\begin{document}
\begin{spacing}{1.01}

\title{Book Ramsey numbers via algebraic constructions\footnote{Center for Discrete Mathematics, Fuzhou University,
Fuzhou, 350108, P.~R.~China. Email: {\tt 1415088965@qq.com, linqizhong@fzu.edu.cn}. Supported in part by National Key R\&D Program of China (Grant No. 2023YFA1010202) and NSFC (No.\ 12571361).}}
\date{}

\author{
Lulu Dai\;\; and \;\; Qizhong Lin
}

\maketitle
\begin{abstract}
Let $B_n$ denote the book graph consisting of $n$ triangles sharing a common edge. Few exact values of $R(B_n,B_n)$ have been obtained since Rousseau and Sheehan (1978) proved, using Paley graphs, $R(B_n, B_n) = 4n + 2$ whenever $4n+1$ is a prime power. 

In this paper, we obtain $R(B_n,B_n)=4n+1$ for infinitely many $n$ by constructing new families of strongly regular graphs.
Moreover, we prove that $R(B_{n-2},B_n)\le 4n-3$ for every $n\ge 3$ with $n\ne 6$,  removing the original condition $n\equiv 2\pmod 3$ due to Rousseau and Sheehan. In particular, if there exists a symmetric Hadamard matrix of order $2n-2$ with all diagonal entries equal to $1$, then $R(B_{n-2},B_n)=4n-3$. As an application, we show that this equality holds for every $n=2^{2\ell-1}+1$ with $\ell\ge 1$.
\end{abstract}

\section{Introduction}

For a positive integer $n$, the book graph $B_n$ is defined as $K_2 + \overline{K}_n$, i.e., $n$ triangles sharing a common edge. The Ramsey number $R(B_m, B_n)$ is the smallest integer $N$ such that every red/blue coloring of the edges of $K_N$ contains either a red copy of $B_m$ or a blue copy of $B_n$.

The study of Ramsey numbers for books dates back to Erd\H{o}s, Faudree, Rousseau and Schelp \cite{efrs} and to Rousseau and Sheehan \cite{RS78} in 1978. In particular, they established the bounds
\[
(4+o(1))n \le R(B_n, B_n) \le 4n+2.
\]
Moreover, Rousseau and Sheehan \cite{RS78} proved that whenever $4n+1$ is a prime power,
\[
R(B_n, B_n) = 4n + 2,
\]
using the Paley graph construction. They further showed that if $4n+1$ cannot be expressed as a sum of two squares, then $R(B_n, B_n) \le 4n+1$, with the first concrete example $R(B_5,B_5)=21$ obtained via an explicit but non-strongly-regular coloring.

It is striking that since that work, very few exact values of $R(B_n,B_n)$ have been obtained beyond those implied by the Paley construction or elementary bounds. In fact, the known exact diagonal values are essentially those with $4n+1$ a prime power (giving $4n+2$) together with the isolated case $n=5$ (where $R(B_5,B_5)=21$) and possibly a few other small $n$ settled by computation. Thus obtaining new infinite families of exact diagonal book Ramsey numbers remains a challenging open problem.

For the off-diagonal case, the results are considerably richer. Rousseau and Sheehan \cite{RS78} obtained the first exact value $R(B_1, B_n) = 2n + 3$ for $n \ge 2$. This was later extended to larger parameters: Faudree, Rousseau and Sheehan \cite{FRS} proved that $R(B_m, B_n) = 2n + 3$ for $n \ge \Omega(m^4)$. Subsequently, Nikiforov and Rousseau \cite{NR1} showed that the equality already holds when $n$ is linear in $m$. Finally, Nikiforov and Rousseau \cite{NR05} used a stability result on books by Bollob\'as and Nikiforov \cite{BN} to prove that for any fixed $\alpha < 1/6$ and sufficiently large $n$,
\[
R(B_{\lceil \alpha n\rceil}, B_n) = 2n + 3,
\]
and that the constant $1/6$ is asymptotically best possible. On the other hand, Conlon, Fox and Wigderson \cite{CFW23} proved that for $\alpha$ sufficiently close to $1$, the random lower bound is asymptotically tight:
\[
R(B_{\alpha n}, B_n) = (\sqrt{\alpha} + 1)^2 n + o(n).
\]
Fan, Lin and Yan \cite{FLY} showed that the random lower bound is asymptotically tight for any fixed $1/4 \le \alpha \le 1$ and sufficiently large $n$; the special case $\alpha = 1/4$ was also established in \cite{CL}.

However, the number of known exact values remains very limited, owing to both the difficulty of constructing exact lower bounds and the challenge of establishing tight upper bounds. What is known consists of only a few small sporadic cases, such as $R(B_2, B_5) = 16$, $R(B_3, B_5) = 17$, and $R(B_4, B_6) = 22$, together with certain infinite families derived from strongly regular graphs, for instance
\[
R(B_{k^2-2}, B_{k^2+1}) = 4k^2
\]
for all $k = 3^{m}2^{m+n-1}$ with $k\ge 2$ \cite{RS78}. In recent work, Wesley~\cite{Wes} improved lower bounds using block-circulant graphs and SAT solvers, proving e.g. $R(B_{n-1},B_n)=4n-1$ for infinitely many $n$. These computational and algebraic results motivate further study of diagonal and nearly diagonal cases.

Our first result provides a lower bound for $R(B_n, B_n)$ using pseudo-cyclic strongly regular graphs (PC-graphs; see Definition \ref{def:pc-gr}).
\begin{theorem}\label{thm:Bn-PC}
Suppose there exists a PC-graph of order $2n-1$. Then
\[
R(B_n,B_n)\ge 4n+1.
\]
\end{theorem}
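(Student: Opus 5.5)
The plan is to exhibit an explicit red/blue colouring of $K_{4n-1+1}=K_{4n}$ with no monochromatic $B_n$. Equivalently, we build a graph $H$ on $4n$ vertices in which every edge lies in at most $n-1$ triangles of $H$, and every non-edge lies in at most $n-1$ triangles of $\overline H$. Let $G$ be the given PC-graph of order $v=2n-1$. In the two-class case a pseudo-cyclic strongly regular graph has conference-graph parameters
\[
\left(v,\tfrac{v-1}{2},\tfrac{v-5}{4},\tfrac{v-1}{4}\right)=\left(2n-1,\;n-1,\;\tfrac{n-3}{2},\;\tfrac{n-1}{2}\right),
\]
and its complement $\overline G$ has the same parameters. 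I will use this symmetry between $G$ and $\overline G$ repeatedly, since it lets the blue count in each case mirror the red count.

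\textbf{Construction.} Write $A$ for the adjacency matrix of $G$ and $\bar A=J-I-A$ for that of $\overline G$. The vertex set of $H$ is $X\cup Y\cup\{u,w\}$, with $X=\{x_1,\dots,x_v\}$ and $Y=\{y_1,\dots,y_v\}$, so $|V(H)|=2v+2=4n$. The edges of $H$ are as follows.
\begin{itemize}
\item $H[X]\cong G$ and $H[Y]\cong \overline G$, both via the indexing.
\item Between $X$ and $Y$, the bipartite adjacency is given by $A$ or by $I+A$. These blocks, or their $\bar A$ analogues, are the parameters to be tuned.
\item $u$ is adjacent to all of $X$ and none of $Y$; $w$ is adjacent to all of $Y$ and none of $X$. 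The pair $uw$ gets whichever colour balances the counts.
\end{itemize}
This is the standard doubling used to pass from a conference graph to a symmetric conference/Hadamard-type object. The swap $X\leftrightarrow Y$, $u\leftrightarrow w$, $G\leftrightarrow \overline G$ essentially interchanges the roles of red and blue. Hence it should suffice to check the red codegrees for one representative of each edge type.

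\textbf{Verification.} For each type of pair, I count common neighbours in $X$, in $Y$ and in $\{u,w\}$ using $A^2=\frac{n-1}{2}\,(J-I)-A+(n-1)I$ (equivalently, $\lambda$ for adjacent pairs and $\mu$ for non-adjacent ones) together with $A\bar A$. The types are: pairs inside $X$ (edge or non-edge of $G$), pairs inside $Y$, cross pairs $x_iy_j$ with $i=j$, with $i\sim j$, and with $i\not\sim j$, and pairs involving $u$ or $w$.

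As a sanity check on the choice of cross block, take an edge $x_ix_{i'}$ with $i\sim i'$. With cross block $A$ its red codegree is
\[
\lambda+\lambda+1=n-2,
\]
which is admissible. With $I+A$ it would be $\lambda+(\lambda+2)+1=n$, which is too large. So I take the cross block to be $A$. Blue codegrees of non-edges then follow from the complementary count: common blue neighbours equal $4n-2-d(a)-d(b)+(\text{red codegree})$. Since $d_H$ is $n$ or close to it on every vertex, this yields values $\le n-1$ once $\mu=\frac{n-1}{2}$ is inserted.

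\textbf{Main obstacle.} The hard part is the cross pairs $x_iy_j$. Their codegree mixes $A$ and $\bar A$, through terms like $(A\cdot A)_{ij}$ and $(A\bar A)_{ij}$, and also the diagonal case $i=j$. Here the counts land exactly on the threshold $n-1$ or $n$, so the choice between the blocks $A$, $I+A$, $\bar A$, and the colour of $uw$, must be made so that every such count is at most $n-1$. If the naive choice gives $n$ for the diagonal pairs, I would first try re-choosing the cross block as $A$ versus $I+\bar A$, and then try recolouring $uw$ or swapping which of $u,w$ attaches to $X$. These are the only remaining degrees of freedom. Everything else reduces to the identities $\lambda=\frac{n-3}{2}$ and $\mu=\frac{n-1}{2}$ for both $G$ and $\overline G$.
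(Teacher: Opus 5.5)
There is a genuine gap. The graph you build, with $H[X]\cong G$ and $H[Y]\cong\overline G$, contains a blue $B_n$, and none of the degrees of freedom you list removes it.

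\textbf{The blue $B_n$.} With your cross block $A$, take $i\ne i'$ non-adjacent in $G$, so that $x_ix_{i'}$ is blue. Its blue common neighbours are:
\begin{itemize}
\item the $\lambda(\overline G)=\frac{n-3}{2}$ vertices $x_k$ with $k\not\sim_G i,i'$;
\item the two vertices $y_i,y_{i'}$, since $A_{ii}=A_{i'i}=0$;
\item $\frac{n-3}{2}$ further vertices $y_k$ with $k\not\sim_G i,i'$;
\item the vertex $w$.
\end{itemize}
That is exactly $n$ in total. Already for $n=3$ and $G=C_5$, the blue edge $x_0x_2$ has blue common neighbours $y_0,y_2,w$.

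\textbf{Why your checks missed it.} Your complementary count hides this because $d_H(x_i)=2n-1$, not roughly $n$. When both endpoints have degree $2n-1$ in a graph on $4n$ vertices, the blue codegree of a non-edge equals its red codegree, which here is $\mu+\mu+1=n$. Your red counts are in fact all fine, including the cross pairs you flagged as the main obstacle. The failure is entirely on the blue side, which you skipped by appealing to a red/blue symmetry that does not exist: $\overline H$ has cross block $J-A$, which contains the diagonal, whereas the swap $X\leftrightarrow Y$ keeps the cross block $A$.

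\textbf{The alternatives fail too.} Each fails on pairs inside one half:
\begin{itemize}
\item $I+A$ gives $n$ red common neighbours for edges in $X$, as you noticed;
\item $\bar A$ gives $n$ blue common neighbours for non-edges in $Y$;
\item $I+\bar A$ gives $n$ red common neighbours for edges in $Y$.
\end{itemize}
None of these counts involves the colour of $uw$, and swapping $u$ and $w$ is only a relabelling.

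\textbf{The missing idea.} Both halves must carry the same graph. Take $H[X]=H[Y]=\overline G$, $x_i\sim y_j$ iff $i\sim_G j$, $u$ joined to all of $X$, $w$ joined to all of $Y$, and $uw$ blue. For a blue pair $x_ix_{i'}$, which now means $i\sim_G i'$, the vertices $y_i,y_{i'}$ are no longer blue to both endpoints. This is exactly the paper's graph on $[2n]\times\{\pm1\}$, with $(i,\varepsilon)\sim(j,\delta)$ iff $i\ne j$ and $\varepsilon\delta=M_{ij}$. Here $M=C+I$, and $C$ is the conference matrix obtained by bordering the Seidel matrix $J-I-2A$ with a row and column of ones. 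The correspondence is $x_i=(i,+1)$, $y_i=(i,-1)$, $u=(2n,+1)$, $w=(2n,-1)$.

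The paper also avoids your case-by-case analysis. A common neighbour of an edge $(i,\varepsilon)(j,\delta)$ corresponds to an index $k\ne i,j$ with $M_{ik}M_{jk}=M_{ij}$ (red) or $M_{ik}M_{jk}=-M_{ij}$ (blue). The identity $\sum_{k\ne i,j}M_{ik}M_{jk}=0$, which follows from $M^2=2nI+2C$, shows that exactly half of the $2n-2$ indices qualify in each case. Hence every red edge and every blue non-clone edge has exactly $n-1$ common neighbours, and the clone pairs $x_iy_i$ and $uw$ have none.
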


Rousseau and Sheehan's Paley graph construction yields $R(B_n,B_n)=4n+2$ whenever $4n+1$ is a prime power. In contrast, we employ arbitrary pseudo-cyclic strongly regular graphs (PC-graphs), which generalize Paley graphs to non-prime-power orders, to obtain the exact value $R(B_n,B_n)=4n+1$ for infinitely many $n$ under the complementary condition that $4n+1$ is not a sum of two squares. 
Moreover, using Mathon's product construction, PC-graphs exist for composite orders $2n-1 = p q^2$ with $q\equiv3\pmod4$, yielding infinite families not accessible via Paley graphs alone.
Combining Theorem~\ref{thm:Bn-PC} with the classical upper bound of Rousseau and Sheehan gives the following corollary.

\begin{corollary}\label{cor:paley-mathon}
Suppose that $4n+1$ is not a sum of two integer squares. Then 
$
R(B_n,B_n)=4n+1
$ if one of the following holds:

\medskip
(i) $2n-1\equiv 1 \pmod 4$ is a prime power;

\medskip
(ii) $2n-1=pq^2$, where $p$ is the order of a PC-graph and $q\equiv 3 \pmod 4$ is a prime power.
\end{corollary}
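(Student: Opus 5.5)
The corollary follows by squeezing $R(B_n,B_n)$ between two bounds. The upper bound $R(B_n,B_n)\le 4n+1$ is the Rousseau--Sheehan result quoted in the introduction, which applies exactly when $4n+1$ is not a sum of two integer squares. For the lower bound we use Theorem~\ref{thm:Bn-PC}: it gives $R(B_n,B_n)\ge 4n+1$ once a PC-graph of order $2n-1$ is available. So the whole task is to produce such a PC-graph in each of cases (i) and (ii).

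\emph{Case (i).} Suppose $2n-1\equiv 1\pmod 4$ is a prime power. The Paley graph on $\mathbb{F}_{2n-1}$ is well defined, since $-1$ is a square in this field. It is the standard example of a conference-type strongly regular graph
\[
\srg\bigl(v,\tfrac{v-1}{2},\tfrac{v-5}{4},\tfrac{v-1}{4}\bigr), \qquad v=2n-1 .
\]
Its multiplicative translates give a cyclotomic association scheme whose two nontrivial classes, the graph and its complement, have equal multiplicities. This is precisely the pseudo-cyclic property of Definition~\ref{def:pc-gr}. I would verify the conditions of that definition directly from the parameters. The key check is that the two nontrivial eigenvalues $\frac{-1\pm\sqrt v}{2}$ both have multiplicity $\frac{v-1}{2}$.

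\emph{Case (ii).} Suppose $2n-1=pq^2$, where a PC-graph of order $p$ exists and $q\equiv 3\pmod 4$ is a prime power. Here I would invoke Mathon's product construction, which the introduction cites as producing PC-graphs of order $pq^2$ from a PC-graph of order $p$. Concretely, take a conference (PC) graph on $p$ vertices and combine it with the affine geometry or $\mathbb{F}_{q^2}$ structure, using $q\equiv 3\pmod 4$ to get the needed sign pattern of squares. This yields a pseudo-cyclic strongly regular graph on $pq^2$ vertices. To keep the argument self-contained, I would check that the product has conference-graph parameters with $v=pq^2$, and that its eigenvalue multiplicities are equal.

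\emph{Conclusion and main obstacle.} In either case Theorem~\ref{thm:Bn-PC} applies, and together with the upper bound this gives $R(B_n,B_n)=4n+1$. The main obstacle is confirming that Mathon's construction, as stated in the literature, gives exactly the pseudo-cyclic property required by Definition~\ref{def:pc-gr}, rather than merely a strongly regular graph with conference parameters. If Definition~\ref{def:pc-gr} just means conference-type parameters, this check is immediate. Otherwise I would cite Mathon's theorem in its association-scheme form.
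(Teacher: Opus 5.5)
Your proposal is correct and follows the paper's proof: the Rousseau--Sheehan upper bound is combined with Theorem~\ref{thm:Bn-PC}, applied to the Paley graph of order $2n-1$ in case (i) and to the PC-graph of order $pq^2$ supplied by Mathon's construction (Lemma~\ref{lem:mathon-one-step}) in case (ii). Your stated ``main obstacle'' does not arise, because Definition~\ref{def:pc-gr} is purely the parameter condition $\srg(\nu,\frac{\nu-1}{2},\frac{\nu-5}{4},\frac{\nu-1}{4})$, so both graphs qualify directly and no eigenvalue-multiplicity or association-scheme check is needed.
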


Thus, Corollary~\ref{cor:paley-mathon} provides two infinite families: condition (i) covers Paley graphs (the classical prime-power case), while condition (ii) includes infinitely many composite orders via Mathon's construction.

Our second result concerns the nearly diagonal book Ramsey number $R(B_{n-2},B_n)$. Rousseau and Sheehan~\cite{RS78} established the upper bound $R(B_{n-2},B_n)\le 4n-3$ under the restrictive condition $n\equiv 2\pmod 3$. We remove this condition completely and prove that the bound holds for all integers $n\ge 3$, with the sole genuine exception of $n=6$ (see Remark \ref{rem:n6} for a concrete construction).
\begin{theorem}\label{thm:near-exact}
For every integer $n\ge 3$ with $n\ne 6$, 
\[
R(B_{n-2},B_n) \le 4n-3.
\]
\end{theorem}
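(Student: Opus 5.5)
We argue by contradiction in the standard Rousseau--Sheehan style. Let $N=4n-3$ and suppose a red/blue colouring of $K_N$ has no red $B_{n-2}$ and no blue $B_n$. Let $R$ be the red graph and $d(v)$ the red degree of $v$. A red edge $uv$ has at most $n-3$ common red neighbours. A blue edge $uv$ has at most $n-1$ common blue neighbours. Counting, for a fixed vertex $v$, the triangles through $v$ by type gives the identity
\[
\sum_{u\in N_R(v)}|N_R(u)\cap N_R(v)| + \sum_{u\notin N_R[v]}|N_B(u)\cap N_B(v)| \;\ge\; \text{(expression in } d(v), N, \text{ and the degrees of neighbours)}.
\]
The cleaner form is Goodman's count. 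The number of monochromatic triangles is at least $\binom N3-\frac12\sum_v d(v)(N-1-d(v))$. It is also at most $\frac13\big(e_R(n-3)+e_B(n-1)\big)$, since each red triangle is counted on three red edges.

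\textbf{Step 1 (counting).} I plug $N=4n-3$ into both bounds, so $N-1=4n-4$. The function $d(N-1-d)$ is maximised at $d=2n-2$. Writing $d(v)=2n-2+x_v$, the inequality becomes a linear-plus-quadratic relation in $\sum x_v$ and $\sum x_v^2$.

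\textbf{Step 2 (forcing regularity).} I expect the inequality to be nearly tight. If so, it should force each $x_v$ into a small window, say $x_v\in\{-1,0\}$ or $\{0,1\}$. It should also force every red edge to lie in exactly $n-3$ red triangles and every blue edge in exactly $n-1$ blue triangles, up to a slack of $O(1)$ summed over all edges.

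\textbf{Step 3 (parity and eigenvalue obstruction).} In the exactly regular case, suppose every vertex has red degree $k=2n-2$ and every edge count is exact. Then $R$ is strongly regular with parameters $(4n-3,\,2n-2,\,n-3,\,\mu)$. Here $\mu$ is determined by the blue condition: non-adjacent vertices in $R$ have $N-2-2k+\mu$ common blue neighbours, and this must equal $n-1$. I then check the standard feasibility condition $k(k-\lambda-1)=(N-k-1)\mu$ and the integrality of eigenvalue multiplicities. The claim is that these fail for all $n\ge 3$ except $n=6$, where $(21,10,3,6)$-type data might pass, or where a non-SRG near-extremal colouring exists, as in Remark~\ref{rem:n6}. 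Rousseau--Sheehan needed $n\equiv 2\pmod 3$ precisely because their divisibility argument used a mod-3 count of triangles. My plan replaces that with a handshake/parity argument on the red graph. For example, $N=4n-3$ is odd, so the red degree $2n-2$ is allowed by parity. The obstruction must therefore come from the triangle count: $\frac13\sum_v(\text{red edges in }N_R(v))$ must be an integer, and I combine this with the slack distribution.

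\textbf{Step 4 (handling slack), the main obstacle.} Removing the mod-3 hypothesis means treating the non-regular near-extremal configurations, where a few vertices have degree $2n-3$ or $2n-1$, or a few edges miss their maximal count. My plan is to localise the slack. I would count, around a vertex $v$ with $d(v)=2n-2+x_v$, the red edges inside $N_R(v)$ in two ways. The first way uses the codegree bound, giving at most $d(v)(n-3)/2$. The second uses the degrees of the neighbours minus the edges leaving $N_R(v)$, which are controlled by the blue book bound on the non-neighbours. This should give a per-vertex inequality forcing $x_v=0$ unless $n$ is small. The remaining small cases ($n\le 7$ or so) would be checked directly, which isolates $n=6$ as the exception. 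This local double counting, and making it strong enough without the mod-3 condition, is where I expect the real difficulty to lie.
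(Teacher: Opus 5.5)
Your outline (Goodman's count, then exact regularity with tight codegrees, then an $\mathrm{srg}(4n-3,2n-2,n-3,n)$, then a feasibility obstruction) matches the paper's. However, both steps that carry the real work are left as expectations, and the tool you fall back on for the decisive step cannot work.

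In Step 2 there is in fact no slack to handle. So Step 4 addresses a difficulty that does not exist, and as sketched, with unspecified small-case checks, it would not be a proof anyway. Write $d(v)=2n-2+x_v$ and $e_R=\frac12\binom N2+y$. Since $N-5=4(n-2)$, Goodman's identity gives $M=\frac{n-2}{3}\binom N2+\frac12\sum_v x_v^2$. The book conditions give $M\le\frac{n-2}{3}\binom N2-\frac{2y}{3}$. Hence $3\sum_v x_v^2+4y\le 0$, with $\sum_v x_v=2y$. Because the $x_v$ are integers, $\sum_v x_v^2\ge\sum_v|x_v|\ge 2|y|$, so $2|y|\le 6|y|+4y\le 0$. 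Thus $y=0$ and every $x_v=0$. Equality throughout then forces every red edge into exactly $n-3$ red triangles and every blue edge into exactly $n-1$ blue triangles. So the red graph is $\mathrm{srg}(4n-3,2n-2,n-3,n)$ outright.

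The genuine gap is Step 3. The condition $k(k-\lambda-1)=(N-k-1)\mu$ holds identically here, since both sides equal $2n(n-1)$. The triangle-count integrality you propose is precisely the Rousseau--Sheehan mod-$3$ argument. In the tight regular configuration there are $(4n-3)(n-1)(n-3)/3$ red and $(4n-3)(n-1)^2/3$ blue triangles, and both are integers whenever $n\equiv 0,1\pmod 3$. For instance, $n=3$ gives $\mathrm{srg}(9,4,0,3)$, which passes every counting, parity and divisibility test you list.

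What is missing is actually carrying out the eigenvalue analysis you only mention. By the standard identity, the nontrivial eigenvalues are the roots $(-3\pm\sqrt{4n+1})/2$ of $x^2+3x-(n-2)=0$.
If $4n+1$ is not a square, these are conjugate irrationals, so each has multiplicity $2n-2$. Then $\operatorname{tr}A=(2n-2)-3(2n-2)=-4(n-1)\neq 0$, a contradiction.
If $4n+1=t^2$, solving $m_\theta+m_\tau=4n-4$ together with $\operatorname{tr}A=0$ gives $m_\theta=2(n-1)+4(n-1)/t$ and $m_\tau=2(n-1)-4(n-1)/t$. Integrality requires $t\mid 4(n-1)=t^2-5$, hence $t\mid 5$. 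As $t>3$, we get $t=5$ and $n=6$.

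This spectral step, not a parity or triangle-divisibility count, is what removes the congruence condition.
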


Moreover, if there exists a symmetric Hadamard matrix $H=(h_{ij})_{1\le i,j\le 2n-2}$ of order $2n-2$ with $h_{ii}=1$ for every $i$, then the matching lower bound holds.
\begin{lemma}\label{cor:near-hadamard-exact}
For every integer $n\ge 3$ with $n\ne 6$, $R(B_{n-2},B_n)\ge 4n-3$ if there exists a symmetric Hadamard matrix of order $2n-2$ with all diagonal entries equal to $1$.
\end{lemma}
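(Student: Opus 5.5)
The plan is to exhibit a red/blue colouring of $K_{4n-4}$ containing no red $B_{n-2}$ and no blue $B_n$. I will encode the colouring as a symmetric $\pm1$ matrix $S$ of order $N=4n-4$ with zero diagonal, where $s_{uv}=+1$ means $uv$ is red. Write $\sigma_u=\sum_w s_{uw}$ for the row sums. Counting common neighbours through $\sum_{w\ne u,v}(1+s_{uw})(1+s_{wv})$ gives, for a red edge $uv$,
\[
4\,r(u,v)=N-4+\sigma_u+\sigma_v+(S^2)_{uv},
\]
and for a blue edge, using $(1-s_{uw})(1-s_{wv})$,
\[
4\,b(u,v)=N-4-\sigma_u-\sigma_v+(S^2)_{uv}.
\]
Here $r(u,v)$ and $b(u,v)$ are the red and blue codegrees of $u$ and $v$. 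We need $r\le n-3$ on red edges, i.e.\ $\sigma_u+\sigma_v+(S^2)_{uv}\le -4$. On blue edges we need $b\le n-1$, i.e.\ $-\sigma_u-\sigma_v+(S^2)_{uv}\le 4$. So the goal is a matrix whose off-diagonal entries of $S^2$ are small and signed correctly, with row sums close to a fixed negative constant.

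For the construction, let $H$ be the given symmetric Hadamard matrix of order $m=2n-2$ with $h_{ii}=1$, and put $C=H-I$. This is symmetric with zero diagonal and $\pm1$ off the diagonal. I would take the block matrix
\[
S=\begin{pmatrix} C & H\\ H & -C\end{pmatrix}
\]
of order $2m=N$. It is a legitimate colouring matrix, since both diagonal blocks have zero diagonal. Using $H^2=mI$ and $CH=HC$, a direct computation gives
\[
S^2=\begin{pmatrix} (2m+1)I-2H & 0\\ 0 & (2m+1)I-2H\end{pmatrix}.
\]
Hence $(S^2)_{uv}=-2h_{ij}$ when $u,v$ correspond to indices $i\neq j$ in the same block, and $(S^2)_{uv}=0$ across blocks. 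Moreover, inside the first block an edge $ij$ is red exactly when $h_{ij}=1$, so then $(S^2)_{uv}=-2$. Inside the second block the colour is reversed relative to $h_{ij}$, so $(S^2)_{uv}=+2$ on its red edges and $-2$ on its blue edges. Thus the off-diagonal entries of $S^2$ are favourable on red edges in the first block, unfavourable on red edges in the second block, and zero on cross edges.

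The main obstacle is the row sums. In the top half $\sigma_u=2\rho_i-1$, where $\rho_i$ is the $i$-th row sum of $H$. In the bottom half $\sigma_u=1$. Neither is controlled in general, whereas the inequalities above essentially need every $\sigma_u\approx-2$ together with a compatible sign of $(S^2)_{uv}$. My first attempt would be to exploit the freedom of the construction to regularize these sums:
\begin{itemize}
\item replace $H$ by $DHD$ for a diagonal $\pm1$ matrix $D$, which preserves symmetry, the Hadamard property and the unit diagonal;
\item replace the cross block $H$ by $-H$, and swap or negate the roles of $C$ and $-C$;
\item permute the two halves.
\end{itemize}
The aim is to make all $\sigma_u$ equal, say $\sigma_u=-2$ or $-3$. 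With $\sigma\equiv-3$, red edges need $(S^2)_{uv}\le 2$, which is automatic. Blue edges need $(S^2)_{uv}\le-2$, which holds within blocks when the colouring is arranged so that blue within-block edges are exactly those with $(S^2)_{uv}=-2$. Cross edges remain a problem under this choice, since there $(S^2)_{uv}=0>-2$.

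If exact regularity turns out to be unachievable, I would instead do a case analysis on the parity and size of $\rho_i$. The fallback is to use orthogonality of rows of $H$ to show that $\sigma_u+\sigma_v$ is always compensated by the sign of $(S^2)_{uv}$. I expect this balancing of row sums against the $\pm2$ entries of $S^2$, and especially the cross-block edges where $(S^2)_{uv}=0$, to be the real work. If it succeeds, any $D$-normalization achieving it finishes the proof, giving a colouring of $K_{4n-4}$ with the required properties and hence $R(B_{n-2},B_n)\ge 4n-3$.
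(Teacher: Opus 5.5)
There is a genuine gap. The matrix you actually propose fails, and your repair strategy aims at the wrong target.

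\textbf{Why your $S$ fails.} Take $S=\begin{pmatrix} C & H\\ H & -C\end{pmatrix}$. Every bottom vertex has $\sigma_u=\rho_i-(\rho_i-1)=1$. A red edge inside the bottom block has $h_{ij}=-1$, hence $(S^2)_{uv}=+2$. Your own formula then gives $4r(u,v)=N-4+1+1+2=N$, i.e.\ red codegree $n-1>n-3$. Such edges exist, since $H\neq J$.
\begin{itemize}
\item Replacing $H$ by $DHD$ cannot help, because the bottom row sums equal $1$ identically for any $H$.
\item More generally, consider every variant with opposite signs on the two diagonal blocks, i.e.\ the ones that make the cross block of $S^2$ vanish. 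Each either fails in exactly this way, or forces $\rho_i=2$ for all $i$ via the red and blue cross-edge conditions (the blue cross edges include the pairs with $i=j$). Then $H\mathbf{1}=2\mathbf{1}$ contradicts $H^2=mI$ unless $m=4$.
\item Your regularity target is also off. Here $\sigma_u$ is a sum of $4n-5$ signs, so it is odd and $\sigma_u=-2$ is impossible. The correct value is $-1$.
\end{itemize}

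\textbf{The missing idea.} The cross block of $S^2$ must not vanish; it should be sign-correlated with the colouring. Among the moves you list, negate the cross block and keep $C$ in both diagonal blocks:
\[
S=\begin{pmatrix} C & -H\\ -H & C\end{pmatrix}.
\]
This is exactly the paper's graph on $[2n-2]\times\{\pm1\}$, where $(i,\varepsilon)(j,\delta)$ is red iff $i\ne j$ and $\varepsilon\delta=h_{ij}$.

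Every row sum is now $(\rho_i-1)-\rho_i=-1$, with no normalization needed. Since $-CH-HC=2H-2mI$,
\[
S^2=\begin{pmatrix} (2m+1)I-2H & 2H-2mI\\ 2H-2mI & (2m+1)I-2H\end{pmatrix}.
\]
So $(S^2)_{uv}=-2s_{uv}$ for every pair except the clone pairs $\{(i,+1),(i,-1)\}$, where it equals $-2(m-1)$. Your two conditions become:
\begin{itemize}
\item $-4\le-4$ on red edges;
\item $4\le 4$ on blue non-clone edges;
\item $4-2m\le 4$ on clone edges.
\end{itemize}
That is, the red codegree is exactly $n-3$ and the blue codegree is at most $n-1$.

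The paper gets the same numbers by a direct count. The red (resp.\ blue) codegree of a non-clone edge is the number of $k\ne i,j$ with $h_{ik}h_{jk}=h_{ij}$ (resp.\ $=-h_{ij}$). Orthogonality of rows $i$ and $j$, together with $h_{ii}=h_{jj}=1$, gives $\sum_{k\ne i,j}h_{ik}h_{jk}=-2h_{ij}$, which yields $n-3$ and $n-1$.
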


Since for every $n=2^{2\ell-1}+1$ with $\ell\ge 1$, such a Hadamard matrix exists (obtained by Kronecker products), the following is immediate.
\begin{corollary}\label{cor:hadamard-exact}
For every $n=2^{2\ell-1}+1$ with $\ell\ge 1$,
$
R(B_{n-2},B_n)=4n-3.
$
\end{corollary}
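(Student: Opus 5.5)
The corollary follows by combining the two results stated just before it. Since $n=2^{2\ell-1}+1\ge 3$ for every $\ell\ge 1$, and $n=6$ would require $2^{2\ell-1}=5$, which is impossible, Theorem~\ref{thm:near-exact} applies and gives $R(B_{n-2},B_n)\le 4n-3$. The matching lower bound will come from Lemma~\ref{cor:near-hadamard-exact}. That lemma needs a symmetric Hadamard matrix of order $2n-2=2^{2\ell}=4^{\ell}$ with every diagonal entry equal to $1$, so the only real work is to construct such a matrix for every $\ell\ge 1$.

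For the construction, take the $2\times 2$ Sylvester matrix
\[
S=\begin{pmatrix}1&1\\1&-1\end{pmatrix}.
\]
It is symmetric but has diagonal $(1,-1)$, so $S^{\otimes 2\ell}$ is unsuitable: its diagonal entries are products containing $-1$ factors. Instead I would start from a symmetric Hadamard matrix of order $4$ whose diagonal is constant $1$, for instance
\[
H_4=J_4-2I_4 \quad\text{negated, i.e. } H_4=2I_4-J_4,
\]
the matrix with $1$ on the diagonal and $-1$ off the diagonal. It is symmetric, and its rows are orthogonal: $H_4^2=4I-4J+4J=4I$ since $J^2=4J$.

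Then set $H=H_4^{\otimes \ell}$. A Kronecker product of Hadamard matrices is Hadamard, because $(A\otimes B)(A\otimes B)^T=AA^T\otimes BB^T$. A Kronecker product of symmetric matrices is symmetric. Each diagonal entry of $H$ is a product of diagonal entries of the factors, so every diagonal entry equals $1$. This gives the required matrix of order $4^\ell=2n-2$.

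Feeding this matrix into Lemma~\ref{cor:near-hadamard-exact} gives $R(B_{n-2},B_n)\ge 4n-3$, and equality follows from the upper bound above. The only point needing care is the choice of base block: the obvious Sylvester construction fails the diagonal condition, and switching to $2I_4-J_4$ handles it. There is no substantive obstacle beyond that.
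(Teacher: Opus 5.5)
Your proposal is correct and follows the paper's proof: you get the upper bound from Theorem~\ref{thm:near-exact} after checking $n\ge 3$ and $n\ne 6$, and the lower bound from Lemma~\ref{cor:near-hadamard-exact} applied to a Kronecker power of a symmetric order-$4$ Hadamard matrix with unit diagonal. The only difference is cosmetic: your base block is $2I_4-J_4$ rather than the explicit $H_4$ in Lemma~\ref{lem:binary-hadamard}, and your check $(2I_4-J_4)^2=4I_4$ is valid.
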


The paper is organized as follows. Section~\ref{sec:prelim} collects the necessary definitions and known results on strongly regular graphs, conference matrices, and Hadamard matrices. Section~\ref{sec:proof-diagonal} proves Theorem~\ref{thm:Bn-PC} and Corollary~\ref{cor:paley-mathon}. Section~\ref{sec:proof-near-exact} is devoted to the nearly diagonal case: Subsection~\ref{sec:proof-upper} proves the improved upper bound (Theorem~\ref{thm:near-exact}), and Subsection~\ref{sec:proof-hadamard} gives the Hadamard lower bound construction, proving Lemma~\ref{cor:near-hadamard-exact} from which Corollary~\ref{cor:hadamard-exact} follows immediately.

\section{Preliminaries}\label{sec:prelim}

We first recall the definition of a symmetric conference matrix, which plays a central role in the constructions of pseudo-cyclic strongly regular graphs.

\begin{definition}\label{def:scm}
A \emph{symmetric conference matrix} of order $N$ is a matrix $C=(c_{ij})$ satisfying
\[
C=C^{\mathsf T},\quad
c_{ii}=0,\quad
c_{ij}\in\{\pm1\}\;(i\ne j),\quad
C^2=CC^{\mathsf T}=(N-1)I.
\]
\end{definition}

It is well known that a necessary condition for the existence of a symmetric conference matrix is $N\equiv 2\pmod 4$; see e.g.~\cite{Belevitch1968,Seidel1973}.  Such matrices are intimately related to regular two-graphs, equiangular lines, and strongly regular graphs.  For a comprehensive survey we refer to \cite{Wallis1972}.

As we shall see in Lemma~\ref{lem:pc-conference}, symmetric conference matrices of order $N$ are equivalent to PC-graphs of order $N-1$.  This equivalence makes PC-graphs a natural combinatorial tool for constructing Ramsey lower bounds.  Therefore we now recall the definition of a strongly regular graph and its special subclass of pseudo-cyclic type.

\begin{definition}\label{def:srg}
A strongly regular graph $\srg(\nu,k,\lambda,\mu)$ is a graph on $\nu$ vertices which is $k$-regular, in which every pair of adjacent vertices has exactly $\lambda$ common neighbors and every pair of non-adjacent vertices has exactly $\mu$ common neighbors. Its complement is also strongly regular, with parameters $\srg(\nu,\nu-k-1,\nu-2k+\mu-2,\nu-2k+\lambda)$.
\end{definition}

\begin{definition}\label{def:pc-gr}
A \emph{pseudo-cyclic strongly regular graph} (PC-graph) of order $\nu$ is a strongly regular graph 
$
\srg\!\left(\nu,\frac{\nu-1}{2},\frac{\nu-5}{4},\frac{\nu-1}{4}\right).
$
\end{definition}

PC-graphs are self-complementary (they are isomorphic to their complement) and satisfy $k=2\mu$.  Their name comes from the fact that the two non-trivial eigenvalues are $(-1\pm\sqrt{\nu})/2$, which are conjugate quadratic irrationals unless $\nu$ is a perfect square.
In particular, when $\nu=4n+1$ is a prime power, the PC-graph is precisely the Paley graph, which Rousseau and Sheehan~\cite{RS78} used to prove $R(B_n,B_n)=4n+2$.

We shall also need the standard spectral properties of strongly regular graphs; see e.g.~\cite[Theorem~9.1.2]{BrouwerHaemers2011}.

\begin{lemma}\label{lem:srg-spectral}
Let $G$ be a strongly regular graph $\srg(\nu,k,\lambda,\mu)$ with adjacency matrix $A$. Then
\[
A^2=(k-\mu)I+(\lambda-\mu)A+\mu J,
\]
where $I$ is the identity matrix and $J$ the all‑one matrix. Moreover, every eigenvalue $x\ne k$ of $A$ satisfies the quadratic equation
$
x^2-(\lambda-\mu)x-(k-\mu)=0.
$
\end{lemma}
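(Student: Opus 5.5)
The plan is to verify the matrix identity entrywise, using the combinatorial meaning of $A^2$, and then to deduce the eigenvalue equation by applying the identity to eigenvectors orthogonal to the all-ones vector $\mathbf 1$.

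For the identity, recall that for any graph with adjacency matrix $A$, the entry $(A^2)_{ij}=\sum_{\ell}a_{i\ell}a_{\ell j}$ counts the common neighbours of $i$ and $j$ when $i\ne j$. It also counts the neighbours of $i$ when $i=j$. By the definition of $\srg(\nu,k,\lambda,\mu)$, this gives three cases:
\begin{itemize}
\item $(A^2)_{ii}=k$;
\item $(A^2)_{ij}=\lambda$ when $i\sim j$;
\item $(A^2)_{ij}=\mu$ when $i\ne j$ and $i\not\sim j$.
\end{itemize}
I then compute the right-hand side $(k-\mu)I+(\lambda-\mu)A+\mu J$ in the same three cases. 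On the diagonal, $a_{ii}=0$, so the entry is $(k-\mu)+0+\mu=k$. For adjacent $i\ne j$ the entry is $0+(\lambda-\mu)+\mu=\lambda$. For non-adjacent $i\ne j$ it is $0+0+\mu=\mu$. The two matrices agree entrywise, which proves the first claim.

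For the spectral statement, I use that $A$ is real symmetric, so its eigenspaces for distinct eigenvalues are mutually orthogonal. Since $G$ is $k$-regular, $A\mathbf 1=k\mathbf 1$, so $\mathbf 1$ lies in the $k$-eigenspace. Let $x\ne k$ be an eigenvalue with eigenvector $v\neq 0$, so $Av=xv$. Orthogonality of eigenspaces gives $\mathbf 1^{\mathsf T}v=0$, and hence $Jv=\mathbf 1\mathbf 1^{\mathsf T}v=0$. Applying the identity to $v$ yields
\[
x^2v=A^2v=(k-\mu)v+(\lambda-\mu)xv .
\]
Since $v\ne 0$, this gives $x^2-(\lambda-\mu)x-(k-\mu)=0$.

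There is no real obstacle here. The only point needing care is the justification that $Jv=0$ for eigenvectors of eigenvalues $x\ne k$. This relies on the symmetry of $A$, not on connectivity of $G$. If $G$ is disconnected, $k$ may have multiplicity greater than one, but the argument is unaffected, since it only uses that $\mathbf 1$ lies in the $k$-eigenspace.
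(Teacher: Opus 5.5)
Your proof is correct and is the standard argument. The paper gives no proof of its own here and instead cites Brouwer and Haemers (Theorem~9.1.2), whose proof is essentially yours: check $A^2=kI+\lambda A+\mu(J-I-A)$ entrywise, then apply it to an eigenvector orthogonal to $\mathbf 1$ (so that $J$ kills it) to get the quadratic.
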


The following equivalence (see Seidel~\cite{Seidel1973}; Mathon~\cite{Mathon1978} gives an explicit formulation) links PC-graphs with symmetric conference matrices; it is fundamental for our lower bound constructions. We include a short proof here for completeness.

\begin{lemma}\label{lem:pc-conference}
There exists a PC-graph of order $N-1$ if and only if there exists a symmetric conference matrix of order $N$.
\end{lemma}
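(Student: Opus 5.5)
We prove both directions through the standard normalization of a symmetric conference matrix. Set $\nu=N-1$. Throughout we use Lemma~\ref{lem:srg-spectral} in the form $A^2=(k-\mu)I+(\lambda-\mu)A+\mu J$. For a PC-graph this reads
\[
A^2=\tfrac{\nu-1}{4}I-A+\tfrac{\nu-1}{4}J,
\]
since $k-\mu=\frac{\nu-1}{4}$ and $\lambda-\mu=-1$.

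($\Rightarrow$) Let $A$ be the adjacency matrix of a PC-graph of order $\nu$, and put $S=J-I-2A$. Then $S$ is symmetric, has zero diagonal, and has entries $-1$ on edges and $+1$ on non-edges. Border it to get
\[
C=\begin{pmatrix}0&\mathbf 1^{\mathsf T}\\ \mathbf 1&S\end{pmatrix},
\]
which is symmetric with zero diagonal and $\pm1$ off the diagonal. It remains to check $C^2=\nu I$ blockwise.
\begin{itemize}
\item The $(1,1)$ entry is $\mathbf 1^{\mathsf T}\mathbf 1=\nu$.
\item The off-diagonal block is $S\mathbf 1=(\nu-1-2k)\mathbf 1=0$, because $k=\frac{\nu-1}{2}$.
\item The main block is $J+S^2$. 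Expanding $S^2=(J-I-2A)^2$ with $JA=AJ=kJ$ and the identity for $A^2$ gives
\[
S^2=(\nu-2-4k+\nu-1)J+I+4A+4A^2 ,
\]
and substituting $4A^2=(\nu-1)I-4A+(\nu-1)J$ reduces this to $\nu I - J$.
\end{itemize}
Hence $C^2=\nu I=(N-1)I$.

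($\Leftarrow$) Let $C$ be a symmetric conference matrix of order $N$. Multiplying row $i$ and column $i$ by $-1$ preserves symmetry, the zero diagonal, and the relation $C^2=(N-1)I$, since it amounts to conjugating by a diagonal $\pm1$ matrix. Using such switchings we make every entry in the first row and first column equal to $+1$ off the diagonal, so $C$ has the bordered form above with some symmetric $\pm1$ matrix $S$ of zero diagonal. Comparing blocks of $C^2=\nu I$ gives:
\begin{itemize}
\item the off-diagonal block yields $S\mathbf 1=0$, so each row of $S$ has equally many $+1$ and $-1$ entries;
\item the main block yields $S^2=\nu I-J$.
\end{itemize}
Define $A=(J-I-S)/2$, the $0/1$ adjacency matrix of the graph whose edges are the $-1$ entries of $S$. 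From $S\mathbf 1=0$ we get $A\mathbf 1=\frac{\nu-1}{2}\mathbf 1$, so the graph is regular of degree $k=\frac{\nu-1}{2}$; in particular $\nu$ is odd. Next, $4A^2=(J-I-S)^2$. Expanding and using $JS=SJ=0$ and $S^2=\nu I-J$ gives
\[
4A^2=(\nu-2)J+I+2S+\nu I-J=(\nu-3)J+(\nu+1)I+2S.
\]
Substituting $S=J-I-2A$ then yields
\[
A^2=\tfrac{\nu-1}{4}I-A+\tfrac{\nu-1}{4}J .
\]
Reading off entries, adjacent vertices have $\frac{\nu-1}{4}-1=\frac{\nu-5}{4}$ common neighbours and non-adjacent vertices have $\frac{\nu-1}{4}$, which are exactly the PC parameters.

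The only delicate point is integrality: we need $\nu\equiv1\pmod4$ for these parameters to be integers. In the ($\Leftarrow$) direction this follows automatically, because the diagonal of $A^2$ equals $k=\frac{\nu-1}{2}$ and the off-diagonal entries of $A^2$ are integers, which forces $\frac{\nu-1}{4}\in\mathbb Z$. Alternatively it follows from the known condition $N\equiv2\pmod4$. The remaining steps are routine matrix algebra.
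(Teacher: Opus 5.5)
Your proof is correct and follows the paper's argument essentially step for step. In the forward direction you border the Seidel matrix $S=J-I-2A$ with $+1$'s. In the converse you switch to normalize one row and column and then set $A=(J-I-S)/2$. The only differences are that you border along the first rather than the last row, and you write out the matrix algebra and the integrality remark that the paper leaves to the reader.

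There is one bookkeeping slip in the forward direction. The expansion should read $S^2=(\nu-2-4k)J+I+4A+4A^2=-\nu J+I+4A+4A^2$. The extra $+(\nu-1)$ in your $J$-coefficient double-counts the $J$-part of $4A^2$, so the displayed line taken literally would give $\nu I+(\nu-2)J$. Your stated conclusion $S^2=\nu I-J$ is nonetheless right.
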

\begin{pf} 
~First, suppose there exists a PC-graph $G$ on $\nu=N-1$ vertices with adjacency matrix $A$, and let $S=J-I-2A$ be its Seidel matrix. By Lemma \ref{lem:srg-spectral}, for a PC-graph one has $S^2 = \nu I - J$ and $S\mathbf{1}=0$.  Construct a matrix $C$ of order $\nu+1=N$ by adding a new row and column to $S$, putting all new off-diagonal entries equal to $+1$ and the new diagonal entry $0$.  Then $C$ is symmetric, has zeros on the diagonal and $\pm1$ elsewhere, and a direct computation shows $C^2 = \nu I = (N-1)I$.  Hence $C$ is a symmetric conference matrix of order $N$.

Conversely, let $C$ be a symmetric conference matrix of order $N$.  By multiplying suitable rows and columns by $-1$ (a switching operation) we may assume that all entries in the last row and column (except the diagonal) are $+1$. Without loss of generality, we may assume that $C$ itself has this form. Hence we can write $C$ in block form as
$
C = \begin{pmatrix} S & \mathbf{1} \\ \mathbf{1}^{\mathsf T} & 0 \end{pmatrix},
$
where $\mathbf{1}$ is the all-one vector of length $N-1$.  Then $S$ is a symmetric matrix with zero diagonal and $\pm1$ off-diagonal; from $C^2=(N-1)I$ one obtains $S^2 = (N-1)I - J$ and $S\mathbf{1}=0$.  Define $A = (J-I-S)/2$.  One verifies that $A$ is a $(0,1)$-matrix and satisfies
$
A^2 = \frac{N-2}{4}(J+I) - A,\; AJ = \frac{N-2}{2}J,
$
which means that $A$ is the adjacency matrix of a PC-graph of order $N-1$ (a strongly regular graph with parameters $(N-1,\frac{N-2}{2},\frac{N-6}{4},\frac{N-2}{4})$). 
\end{pf}

To construct infinite families of PC-graphs (and hence of symmetric conference matrices) we rely on the following multiplication theorem due to Mathon \cite[Theorem~4.1]{Mathon1978}.  It is a powerful extension of Turyn's product construction \cite{Turyn1971}.

\begin{lemma}\label{lem:mathon-one-step}
Let $p$ be the order of a PC-graph (equivalently, $p+1$ is the order of a symmetric conference matrix), and let $q\equiv 3\pmod 4$ be a prime power. Then there exists a PC-graph of order $p\,q^{2}$ (and consequently a symmetric conference matrix of order $p q^{2}+1$).
\end{lemma}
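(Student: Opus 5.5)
The plan is to argue entirely on the level of Seidel matrices and then appeal to Lemma~\ref{lem:pc-conference}. By the converse direction of that lemma it suffices to build a symmetric matrix $S'$ of order $pq^2$ with zero diagonal and $\pm1$ off-diagonal entries, satisfying
\[
S'\mathbf 1=0,\qquad S'^2=pq^2 I-J.
\]
Bordering such an $S'$ by a row and column of $+1$'s gives a symmetric conference matrix of order $pq^2+1$. From that matrix $A=(J-I-S')/2$ is the desired PC-graph. On the input side, the PC-graph of order $p$ gives, by the forward direction of Lemma~\ref{lem:pc-conference}, a symmetric conference matrix $C$ of order $p+1$. Equivalently, it gives a Seidel matrix $S$ of order $p$ with $S^2=pI-J$ and $S\mathbf 1=0$.

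For the index set of $S'$ I take $[p]\times \mathrm{GF}(q)^2$, that is, $p$ copies of the affine plane $AG(2,q)$. Write $\chi$ for the quadratic character of $\mathrm{GF}(q)$. Since $q\equiv3\pmod4$, we have $\chi(-1)=-1$, so the $q\times q$ matrix $Q=(\chi(b-a))$ is skew, with $Q^2=-qI+J$ and $QJ=0$. The plane has $q+1\equiv 0\pmod 4$ parallel classes $\pi_0,\dots,\pi_q$. The blocks of $S'$ will be of the following shape.

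\begin{enumerate}
\item The block indexed by $(i,j)$ with $i\neq j$ is $s_{ij}$ times a $q^2\times q^2$ $\pm1$ matrix $Y_{ij}$.
\item The diagonal blocks are a fixed matrix $X$ with zero diagonal.
\end{enumerate}

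The matrices $X$ and $Y_{ij}$ are assembled from the line-incidence matrices $L_0,\dots,L_q$ of the parallel classes. Each $L_t$ is $J_q\otimes I_q$ in suitable coordinates, and for $t\neq t'$ we have $L_tL_{t'}=J$. The blocks are also built from the Kronecker-type matrices $I\otimes Q$ and $Q\otimes J$, coordinatized along each class. Concretely, the first attempt is $X=\sum_t \varepsilon_t (L_t-I)$ with signs $\varepsilon_t$ balanced so that $X\mathbf 1=0$. For the off-diagonal blocks, the idea is to let the row of the conference matrix $C$ indexed by $i$ select which parallel class, and which sign pattern, enters $Y_{ij}$. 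This uses that $C$ has one more row than $S$, matching $q+1$ classes against the $p+1$ entries of $C$ only through the orthogonality $CC^{\mathsf T}=pI$.

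The verification then splits into three block identities:
\begin{align*}
X^2+\sum_{k\ne i} Y_{ik}Y_{ki} &= pq^2I - J_{q^2} &&\text{(diagonal blocks)},\\
XY_{ij}+Y_{ij}X+\sum_{k\ne i,j}s_{ik}s_{kj}Y_{ik}Y_{kj} &= -J &&\text{(off-diagonal blocks)},\\
S'\mathbf 1 &= 0 &&\text{(row sums)}.
\end{align*}
The off-diagonal identity is where $S^2=pI-J$ enters, through $\sum_k s_{ik}s_{kj}=-1$ for $i\ne j$. This forces the products $Y_{ik}Y_{kj}$ to be essentially independent of $k$ modulo multiples of $J$. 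That rigidity is exactly why $Y$ should be built from the mutually "orthogonal" parallel classes, because their pairwise products are all $J$, together with the skew character matrix $Q$, which contributes $-qI+J$ on squaring.

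The main obstacle I expect is choosing the signs and the assignment of classes to blocks so that the cross terms cancel exactly. The condition $q\equiv 3\pmod 4$ should be used precisely here: the skewness of $Q$ makes $Y_{ij}^{\mathsf T}=Y_{ji}$ compatible with the symmetry $s_{ij}=s_{ji}$. If a direct choice fails, I would fall back on following Mathon's original argument~\cite{Mathon1978}. That argument realizes the same Seidel matrix via Turyn's product~\cite{Turyn1971}, applied to $C$ and a skew conference-type matrix of order $q+1$ built from $Q$ by bordering. The product is then passed to the $q^2$-point structure, and the identities above are checked there.
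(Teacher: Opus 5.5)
Your proposal stops short of a proof. The blocks $Y_{ij}$ are never specified, and none of the three block identities is verified. What you do check is routine: the reduction to a Seidel matrix $S'$ with $S'^2=pq^2I-J$ and $S'\mathbf 1=0$, the identity $Q^2=-qI+J$, the relation $L_tL_{t'}=J$, and $X^2=q^2I-J$ for balanced $\varepsilon_t$. (A minor slip: the $(i,j)$ block of $S'^2$ is $s_{ij}(XY_{ij}+Y_{ij}X)+\sum_{k\ne i,j}s_{ik}s_{kj}Y_{ik}Y_{kj}$; you dropped the factor $s_{ij}$.)

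More seriously, the mechanism you outline breaks down when $p$ and $q$ are unrelated. Take its natural reading: $s_{ij}Y_{ij}=a_{ij}L_t+b_{ij}N_t$ for a single class $t=t(i,j)$. Here $N_t(x,y)=\chi(w_t(y)-w_t(x))$, where $w_t(x)\in\mathrm{GF}(q)$ labels the class-$t$ line through $x$.
\begin{itemize}
\item Symmetry of $S'$ forces $t(j,i)=t(i,j)$, $a_{ji}=a_{ij}$ and $b_{ji}=-b_{ij}$.
\item Let $W_s$ be the space of zero-sum functions constant on the lines of class $s$. On $W_s$ the operators $L_t$, $N_t$ and $X$ act as $q\delta_{st}$, $q\delta_{st}Q$ and $q\varepsilon_s$, and $J$ vanishes there.
\item Hence on $\mathbb{R}^p\otimes W_s$ you get $S'=q(\varepsilon_sI+A_s)\otimes I+qB_s\otimes Q$. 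Here $A_s$ and $B_s$ carry the entries $a_{ij}$ and $b_{ij}$ of the pairs assigned to class $s$.
\item Since $Q^2=-qI$ on $\mathbf 1^\perp$, the identity $S'^2=pq^2I-J$ forces $(\varepsilon_sI+A_s)^2+qB_sB_s^{\mathsf T}=pI$.
\item The $(i,i)$ entry of this reads $1+(q+1)\deg_s(i)=p$, where $\deg_s(i)$ is the number of $j$ with $t(i,j)=s$.
\end{itemize}
So every row must use every class exactly $(p-1)/(q+1)$ times. This needs $(q+1)\mid(p-1)$, and it already fails for Mathon's own example $(p,q)=(5,3)$: each class would have to be a perfect matching on $5$ points. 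This is the precise form of your ``matching $q+1$ classes against the entries of $C$'': for arbitrary $p$ and $q$ the counts do not match.

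The paper itself gives no argument for this lemma; it only cites Mathon's Theorem~4.1. Your fallback therefore coincides with the paper's treatment, but it is a citation, not a proof. Your description of it is also too vague to check. A Kronecker-type product of matrices of orders $p+1$ and $q+1$ has order $(p+1)(q+1)$, and you do not say how it is ``passed to'' the $q^2$-point structure.

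The obstruction above does not seem to be an artifact of your ansatz. Mathon's construction, as it is usually quoted, ties the two parameters together: $q\equiv 3\pmod 4$ and $p=q+2$, both prime powers, giving PC-graphs of orders $45$, $441$, \dots. That link is exactly what allows the $q+1$ parallel classes to be matched with the $p-1$ other blocks. The unconditional form stated here would, for instance, yield a PC-graph on $117=13\cdot 3^2$ vertices, i.e.\ a symmetric conference matrix of order $118$. To my knowledge that order is still listed as open. Before trying to complete the construction, check the exact hypotheses of the cited theorem. An argument of the type you sketch needs an arithmetic relation between $p$ and $q$ that the statement does not supply.
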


\begin{remark}
The classical Paley construction yields a PC-graph whenever $\nu\equiv 1\pmod 4$ is a prime power.  For composite orders, Mathon's lemma provides a recursive construction: starting from a PC-graph of order $p$ and a prime power $q\equiv 3\pmod 4$, one obtains PC-graphs of order $p q^{2}$.  For example, taking $p=5$ (Paley graph of order $5$) and $q=3$ gives a PC-graph of order $45$, which Mathon explicitly constructed and analyzed in \cite[pp.~329-331]{Mathon1978}.
\end{remark}

Finally, we recall the definition of a symmetric Hadamard matrix. Such matrices will be used to obtain the nearly diagonal lower bound for the book Ramsey numbers.


\begin{definition}\label{def:hadamard}
A Hadamard matrix of order $N$ is a matrix $H=(h_{ij})$ satisfying
\[
h_{ij}\in\{\pm1\}\quad\text{for all }i,j,
\qquad
HH^{\mathsf T}=NI.
\]
\end{definition}

Such matrices are important in algebraic combinatorics and have deep connections with strongly regular graphs, conference matrices, equiangular lines, and two-graphs. For our purpose, we only need symmetric Hadamard matrices whose diagonal entries are all equal to $1$. Fortunately, such matrices exist for infinitely many orders; in particular, for every $\ell\ge 1$ there exists a symmetric Hadamard matrix of order $2^{2\ell}$ with all diagonal entries equal to $1$, obtained by Kronecker products of the order‑$4$ matrix given in Lemma~\ref{lem:binary-hadamard}.


\section{Diagonal lower bound via pseudo-cyclic strongly regular graphs}\label{sec:proof-diagonal}

In this section we prove the diagonal lower-bound construction and then derive the exact-value corollary stated in the introduction. 

\medskip
\begin{pf}[Proof of Theorem~\ref{thm:Bn-PC}]
To prove $R(B_n,B_n)\ge 4n+1$ under the assumption that a PC-graph of order $2n-1$ exists, note that by Lemma~\ref{lem:pc-conference} such a PC-graph is equivalent to a symmetric conference matrix of order $2n$. Hence it suffices to assume that $C$ is a symmetric conference matrix of order $N=2n$.

Let $M=C+I$; write $M_{ij}$ for its entries. Then $M_{ii}=1$ and $M_{ij}=C_{ij}\in\{+1,-1\}$ for $i\neq j$, so $M$ is a symmetric $\{\pm1\}$-matrix with $1$'s on the diagonal. Since $C^2=(N-1)I$, we obtain $$M^2=(C+I)^2=C^2+2C+I=N I+2C.$$ Thus, for $i\ne j$, letting $M_i$ denote the $i$-th row of $M$, we have
\begin{equation}\label{eq:conference-inner-product}
\langle M_i,M_j\rangle=(M^2)_{ij}=2C_{ij}=2M_{ij}.
\end{equation}

We now construct a graph $G$ on $V(G)=[N]\times\{+1,-1\}$; hence  $|V(G)|=2N=4n$.
For two distinct vertices $(i,\varepsilon)$ and $(j,\delta)$ with $\varepsilon,\delta\in\{+1,-1\}$,
\[
(i,\varepsilon)(j,\delta)\in E(G) \iff i\neq j \;\text{ and }\; \varepsilon\delta = M_{ij}.
\]
In particular, the nonadjacent vertices $(i,+1)$ and $(i,-1)$ form a clone pair.

We first count common neighbors of an edge of $G$. Let $(i,\varepsilon)(j,\delta)\in E(G)$. Then $i\ne j$ and $\varepsilon\delta=M_{ij}$. A vertex $(k,\eta)$ with $k\ne i,j$ is adjacent in $G$ to both endpoints if and only if
\[
\varepsilon\eta=M_{ik},
\quad\text{and}\quad
\delta\eta=M_{jk}.
\]
Multiplying these two equations gives the necessary condition
\[
M_{ik}M_{jk}=\varepsilon\delta=M_{ij}.
\]

Conversely, once $k\ne i,j$ is fixed, the first equation $\varepsilon\eta=M_{ik}$ uniquely determines $\eta$. 
And if $M_{ik}M_{jk}=M_{ij}$ holds, then using $\varepsilon^2=1$ and $M_{ik}^2=1$ we obtain
\[
\delta\eta = (\varepsilon M_{ij})(\varepsilon M_{ik}) = M_{ij}M_{ik} = M_{jk},
\]
so the second equation $\delta\eta=M_{jk}$ follows. Therefore the number of common neighbors in $G$ of the edge $(i,\varepsilon)(j,\delta)$ is
\[
\left|\{k\in[N]\setminus\{i,j\}:M_{ik}M_{jk}=M_{ij}\}\right|.
\]

By using \eqref{eq:conference-inner-product}, we have $$\sum_{k=1}^N M_{ik}M_{jk}=\langle M_i,M_j\rangle=2M_{ij}.$$ The two terms with $k=i$ and $k=j$ are both equal to $M_{ij}$, because $M_{ii}=M_{jj}=1$. Hence
\begin{equation}\label{eq:sum-k-notequal-ij}
\sum_{k\ne i,j}M_{ik}M_{jk}=0.
\end{equation}
For $k\ne i,j$, each product $M_{ik}M_{jk}$ equals $\pm1$. Let
\[
a=\bigl|\{k\ne i,j: M_{ik}M_{jk}=M_{ij}\}\bigr|,\quad \text{and} \quad
b=\bigl|\{k\ne i,j: M_{ik}M_{jk}=-M_{ij}\}\bigr|.
\]
Then $a+b = N-2$ and
\[
\sum_{k\ne i,j}M_{ik}M_{jk}=a\cdot M_{ij}+b\cdot(-M_{ij})=(a-b)M_{ij}.
\]
Combining with \eqref{eq:sum-k-notequal-ij} gives $(a-b)M_{ij}=0$, so $a=b$ because $M_{ij}=\pm1\neq0$. Consequently, we obtain $a=b=\frac{N-2}{2}=n-1$. This implies that every edge of $G$ has exactly $n-1$ common neighbors in $G$. Hence $G$ contains no copy of $B_n$.

It remains to check the complement. First consider a clone edge $(i,+1)(i,-1)\in E(\overline G).$ For $k\ne i$, the vertex $(k,\eta)$ is adjacent in $\overline G$ to $(i,+1)$ if and only if $\eta=-M_{ik}$, while it is adjacent in $\overline G$ to $(i,-1)$ if and only if $\eta=M_{ik}$, but these two conditions cannot hold simultaneously. Hence every clone edge of $\overline G$ has no common neighbor in $\overline G$.

Now let
\[
(i,\varepsilon)(j,\delta)\in E(\overline G),
\quad i\ne j.
\]
Then $\varepsilon\delta=-M_{ij}$. As above, a vertex $(k,\eta)$ with $k\ne i,j$ is a common neighbor of this edge in $\overline G$ if and only if
\[
\varepsilon\eta=-M_{ik},
\quad\text{and}\quad
\delta\eta=-M_{jk}.
\]
Therefore the number of common neighbors in $\overline G$ of the edge $(i,\varepsilon)(j,\delta)$ is
\[
\left|\{k\in[N]\setminus\{i,j\}:M_{ik}M_{jk}=-M_{ij}\}\right|.
\]

By \eqref{eq:sum-k-notequal-ij} and a similar argument as above, exactly half of the $N-2$ indices $k\ne i,j$ satisfy this condition. Therefore every non-clone edge of $\overline G$ has exactly $\frac{N-2}{2}=n-1$ common neighbors in $\overline G$. Hence $\overline G$ contains no copy of $B_n$.

Thus we have constructed a graph on $4n$ vertices whose graph and complement both avoid $B_n$. Therefore $R(B_n,B_n)\ge 4n+1$.
\end{pf}

We next derive the exact-value corollary. Recall that Rousseau and Sheehan \cite[Corollary~2.2]{RS78} proved the following upper bound: if $4n+1$ is not a sum of two integer squares, then $ R(B_n,B_n)\le 4n+1$.

\medskip
\begin{pf}[Proof of Corollary~\ref{cor:paley-mathon}]
Assume first that condition (i) holds. Since $2n-1\equiv 1\pmod 4$ is a prime power, the Paley graph of order $2n-1$ is a PC-graph (see, e.g., Mathon~\cite[Theorem~1.2]{Mathon1978}). Hence Theorem~\ref{thm:Bn-PC} gives $R(B_n,B_n)\ge 4n+1$. The reverse inequality follows from Rousseau and Sheehan's upper bound, because $4n+1$ is not a sum of two integer squares. Therefore $R(B_n,B_n)=4n+1$ in case (i).

Assume next that condition (ii) holds. By Lemma~\ref{lem:mathon-one-step}, since $p$ is the order of a PC-graph and $q\equiv 3\pmod 4$ is a prime power, $pq^2$ is also the order of a PC-graph. As $2n-1=pq^2$, there exists a PC-graph of order $2n-1$. Theorem~\ref{thm:Bn-PC} again gives $R(B_n,B_n)\ge 4n+1$. The same upper bound yields $R(B_n,B_n)\le 4n+1$. Hence in case (ii) we also have $R(B_n,B_n)=4n+1$.
\end{pf}

\begin{remark}
The corollary yields explicit examples from case~(i). For instance, when $n=5$, we have $2n-1=9=3^2\equiv 1 \pmod 4,$ $4n+1=21=3\cdot 7,$ which is not a sum of two squares. Hence Corollary~\ref{cor:paley-mathon} gives $R(B_5,B_5)=21$ \cite{RS78}. Some further examples are listed below.

\begin{center}
\small
\setlength{\tabcolsep}{8pt}
\renewcommand{\arraystretch}{1.15}
\begin{tabular}{c|ccccccccc}
\hline
$n$ 
& $5$ & $19$ & $41$ & $63$ & $75$ & $85$ & $117$ & $129$ & $145$ \\
\hline
$R(B_n,B_n)$ 
& $21$ & $77$ & $165$ & $253$ & $301$ & $341$ & $469$ & $517$ & $581$ \\
\hline
\end{tabular}
\end{center}

\end{remark}

\smallskip
\begin{remark}
Case~(ii) also yields infinitely many exact values. For instance, take $q=3$ and any prime power $p\equiv1\pmod4$. Then $2n-1=9p$ satisfies condition~(ii) of Corollary~\ref{cor:paley-mathon}, and $4n+1=3(6p+1)$ is not a sum of two squares. Some resulting values are listed below.

\begin{center}
\small
\setlength{\tabcolsep}{7pt}
\renewcommand{\arraystretch}{1.15}
\begin{tabular}{c|ccccccccc}
\hline
$p$
& $5$ & $13$ & $17$ & $25$ & $29$ & $37$ & $49$ & $53$ & $61$ \\
\hline
$n=(9p+1)/2$
& $23$ & $59$ & $77$ & $113$ & $131$ & $167$ & $221$ & $239$ & $275$ \\
\hline
$R(B_n,B_n)$
& $93$ & $237$ & $309$ & $453$ & $525$ & $669$ & $885$ & $957$ & $1101$ \\
\hline
\end{tabular}
\end{center}
\end{remark}

\section{Nearly diagonal upper bound and Hadamard construction}\label{sec:proof-near-exact}

This section proves the nearly diagonal results. Subsection~\ref{sec:proof-upper} establishes the upper bound stated in Theorem~\ref{thm:near-exact}, and Subsection~\ref{sec:proof-hadamard} presents the Hadamard lower bound construction.

\subsection{Upper bound for $R(B_{n-2},B_n)$}\label{sec:proof-upper}

The original proof of Rousseau and Sheehan \cite{RS78} for the case
$n\equiv 2\pmod 3$ used only a counting argument (Goodman's identity)
and did not analyse the structure of a hypothetical counterexample.
In contrast, we show that any counterexample would necessarily be a
strongly regular graph with parameters
$\srg(4n-3,2n-2,n-3,n)$.  Using eigenvalue methods, we prove that
such a graph exists only when $n=6$, thereby removing the congruence
condition and establishing the upper bound for all $n\ge 3$ with
$n\neq 6$. 

\medskip
\begin{pf}[Proof of Theorem~\ref{thm:near-exact}]
Suppose for a contradiction that there exists a graph $G$ on $p=4n-3$ vertices such that $G$ contains no $B_{n-2}$ and $\overline G$ contains no $B_n$. Let $e=e(G)$ and $f=\binom p2.$

Let $M$ denote the total number of monochromatic triangles in the red-blue coloring of $K_p$ induced by $G$ and $\overline G$. Since $G$ contains no $B_{n-2}$, every edge of $G$ lies in at most $n-3$ triangles of $G$. Similarly, since $\overline G$ contains no $B_n$, every edge of $\overline G$ lies in at most $n-1$ triangles of $\overline G$. Therefore
\begin{equation}\label{eq:upperM}
M\le \frac{(n-3)e+(n-1)(f-e)}{3}.
\end{equation}
It is convenient to set $e=\frac f2+ x.$
Then \eqref{eq:upperM} becomes
\begin{equation}\label{eq:upperM2}
M\le \frac{(n-2)f-2x}{3}.
\end{equation}

On the other hand, Goodman's identity \cite{Goodman1959} gives
\begin{equation}\label{eq:goodman}
M=\binom p3-\frac12\sum_{i=1}^p d_i(p-1-d_i)
\end{equation}
where $d_i=d_G(v_i)$ for $1\le i\le p$. Let $d_i=\frac{p-1}{2}+\varepsilon_i$.
Since $\sum_{i=1}^p d_i = 2e = f+2x$, and $\sum_{i=1}^p d_i = \frac{p(p-1)}{2} + \sum_{i=1}^p \varepsilon_i$, noting that $f = \frac{p(p-1)}{2}$ we obtain
\begin{equation}\label{eq:sumeps}
\sum_{i=1}^p \varepsilon_i = 2x.
\end{equation}
Furthermore, $d_i(p-1-d_i)=(\frac{p-1}{2}+\varepsilon_i)(\frac{p-1}{2}-\varepsilon_i)=\frac{(p-1)^2}{4}-\varepsilon_i^2.$ Substituting this into \eqref{eq:goodman} and using $p=4n-3$ gives
\begin{equation}\label{eq:lowerM}
\begin{aligned}
M
&= \frac{p(p-1)(p-2)}{6}
   -\frac{p(p-1)^2}{8}
   +\frac{1}{2}\sum_{i=1}^p \varepsilon_i^2 \\
&= \frac{p(p-1)(p-5)}{24}
   +\frac{1}{2}\sum_{i=1}^p \varepsilon_i^2  = \frac{(n-2)f}{3}
   +\frac{1}{2}\sum_{i=1}^p \varepsilon_i^2 .
\end{aligned}
\end{equation}

Comparing \eqref{eq:upperM2} and \eqref{eq:lowerM}, we obtain
\begin{equation}\label{eq:keyineq}
3\sum_{i=1}^p\varepsilon_i^2+4x \le 0.
\end{equation}

Since \eqref{eq:sumeps} and the $\varepsilon_i$ are integers,
$
\sum_i\varepsilon_i^2\ge \sum_i|\varepsilon_i|\ge \left|\sum_i\varepsilon_i\right|=2|x|.
$
Then the left-hand side of \eqref{eq:keyineq} is no less than $6|x|+4x \ge 2|x|$. Hence we have
\[
x=0
\quad\text{and}\quad
\sum_i\varepsilon_i^2=0.
\]
Thus $d_i=2n-2$ for all $i$, implying that $G$ is $(2n-2)$-regular. Moreover, equality must hold in \eqref{eq:upperM}. Therefore every edge of $G$ lies in exactly $n-3$ triangles of $G$, and every edge of $\overline G$ lies in exactly $n-1$ triangles of $\overline G$. 

Now suppose $xy\in E(\overline G)$. Then the number of common neighbors of $x$ and $y$ in $\overline G$ satisfies
\[
p-2-d_G(x)-d_G(y)+|N_G(x)\cap N_G(y)|=n-1.
\]
Using $p=4n-3$ and $d_G(x)=d_G(y)=2n-2$, we have $|N_G(x)\cap N_G(y)|=n.$ Thus $G$ must be a strongly regular graph
\begin{equation}\label{eq:badparams}
\srg(4n-3,2n-2,n-3,n).
\end{equation}

We will show that no such strongly regular graph exists unless $n=6$. Let $A$ be the adjacency matrix of $G$. By Lemma~\ref{lem:srg-spectral},
every eigenvalue $x\ne 2n-2$ of $G$ satisfies
\begin{equation}\label{eq:bad-eigen-equation}
x^2+3x-(n-2)=0.
\end{equation}
Thus the two possible nontrivial eigenvalues are
\[
\theta=\frac{-3+\sqrt{4n+1}}{2},
\qquad
\tau=\frac{-3-\sqrt{4n+1}}{2}.
\]
We first show that $4n+1$ must be a square. Otherwise, $\theta$ and $\tau$ would be conjugate quadratic irrationalities. Since the characteristic polynomial of the integer matrix $A$ has integer coefficients, these two conjugate roots would occur with the same multiplicity. As the total multiplicity of the nontrivial eigenvalues is $\nu-1=4n-4,$ each of $\theta$ and $\tau$ would have multiplicity $2n-2$. Taking the trace of $A$ would then give
\[
0=(2n-2)+(2n-2)(\theta+\tau)=(2n-2)-3(2n-2)=-4(n-1),
\]
which is impossible for $n\ge 3$. Hence $4n+1$ is a square. Write $4n+1=t^2.$ Then
\[
\theta=\frac{-3+t}{2},
\quad
\tau=\frac{-3-t}{2}.
\]
Let $m_\theta$ and $m_\tau$ be the multiplicities of $\theta$ and $\tau$, respectively. Since $ m_\theta+m_\tau=\nu-1=4n-4$ and $\operatorname{tr}(A)=0$, we have $(2n-2)+m_\theta\theta+m_\tau\tau=0.$ Solving these two linear equations gives
\[
m_\theta=2(n-1)+\frac{4(n-1)}{t},
\quad
m_\tau=2(n-1)-\frac{4(n-1)}{t}.
\]
Since $m_\theta$ and $m_\tau$ are integers, $t$ divides $4(n-1)$. But $4(n-1)=4n+1-5=t^2-5.$ Hence $t\mid 5$. Since $n\ge 3$, we have $t>3$, and therefore $t=5$. This forces $4n+1=25,$ so $n=6$.

Consequently, for every $n\ge 3$ with $n\ne 6$, no such counterexample exists. Therefore the upper bound
$
R(B_{n-2},B_n)\le 4n-3
$ follows as desired.
This completes the proof.
\end{pf}

We note that Kalfus and Lidick\'{y} \cite{KL} proved the special case $R(B_8,B_{10})\le 37$ using AutoMath,  an AI assisted mathematical discovery workflow.

\begin{remark}\label{rem:n6}
The exceptional case $n=6$ is genuine.  Let $T(7)$ denote the
triangular graph, namely the line graph $L(K_7)$. Thus the vertices of $T(7)$ are the $2$-subsets of a $7$-element set, and two vertices are adjacent if and only if the corresponding $2$-subsets intersect. Hoffman~\cite{Hoffman1960} showed that $T(7)$ is a strongly regular graph $\srg(21,10,5,4)$, hence $\overline{T(7)}$ is a strongly regular graph $\srg(21,10,3,6)$. Taking $G=\overline{T(7)}$ gives a graph on $21$ vertices containing no $B_4$ whose complement contains no $B_6$. Consequently, $R(B_4,B_6) \ge 22 > 4\cdot6-3$, so the bound $R(B_{n-2},B_n)\le 4n-3$ fails for $n=6$, confirming that $n=6$ is indeed a genuine exception.
\end{remark}

\subsection{The Hadamard lower bound construction}\label{sec:proof-hadamard}

We now present the Hadamard lower-bound construction. The construction is analogous to that in Theorem~\ref{thm:Bn-PC}, but instead of using a symmetric conference matrix (which gave $M=C+I$), we use a symmetric Hadamard matrix $H$ with all diagonal entries equal to $1$.

\medskip
\begin{pf}[Proof of Lemma~\ref{cor:near-hadamard-exact}]
Let $n\ge 3$ with $n\ne 6$, and suppose that there exists a symmetric Hadamard matrix $H=(h_{ij})_{1\le i,j\le N}$ of order $N=2n-2$ with $h_{ii}=1$ for every $i$. We construct a graph $G$ on $V(G)=[N]\times\{+1,-1\}.$ Thus $|V(G)|=2N=4n-4$. For two distinct vertices $(i,\varepsilon)$ and
$(j,\delta)$, where $\varepsilon,\delta\in\{+1,-1\}$, define $(i,\varepsilon)(j,\delta)\in E(G)$ if and only if $i\ne j$ and $\varepsilon\delta=h_{ij}.$

The same common-neighbor counting argument as in the proof of Theorem~\ref{thm:Bn-PC} gives the following expressions. If $i\ne j$ and $(i,\varepsilon)(j,\delta)\in E(G)$, then the number of common neighbors of this edge in $G$ is
\[
\alpha_{ij}=\bigl|\{k\in [N]\setminus\{i,j\}:h_{ik}h_{jk}=h_{ij}\}\bigr|.
\]
Also, if $i\ne j$ and $(i,\varepsilon)(j,\delta)\in E(\overline G)$, then the number of common neighbors of this edge in $\overline G$ is
\[
\beta_{ij}=\bigl|\{k\in [N]\setminus\{i,j\}:h_{ik}h_{jk}=-h_{ij}\}\bigr|.
\]
For fixed $i\neq j$, the indices $k\neq i,j$ are partitioned into those with $h_{ik}h_{jk}=h_{ij}$ (counted by $\alpha_{ij}$) and those with $h_{ik}h_{jk}=-h_{ij}$ (counted by $\beta_{ij}$). Hence 
\begin{equation}\label{eq:partition}
\alpha_{ij}+\beta_{ij}=N-2.
\end{equation}

Similar to the proof of Theorem~\ref{thm:Bn-PC}, every clone edge $(i,+1)(i,-1)$ of $\overline G$ has no common neighbor in $\overline G$; thus we only need to compute $\alpha_{ij}$ and $\beta_{ij}$.

Since $HH^{\mathsf T}=NI$, the $i$-th and $j$-th rows are orthogonal for $i\ne j$, so
\[
\sum_{k=1}^N h_{ik}h_{jk}=0.
\]
Using symmetry and $h_{ii}=h_{jj}=1$, the two terms with $k=i$ and $k=j$ are both equal to $h_{ij}$. Hence
\begin{equation}\label{eq:hadamard-excluded-row-sum}
\sum_{k\ne i,j}h_{ik}h_{jk}=-2h_{ij}.
\end{equation}

Among the $N-2$ indices $k\ne i,j$, exactly $\alpha_{ij}$ of them satisfy $h_{ik}h_{jk}=h_{ij}$, and the remaining $\beta_{ij}=N-2-\alpha_{ij}$ satisfy $h_{ik}h_{jk}=-h_{ij}$. Dividing \eqref{eq:hadamard-excluded-row-sum} by $h_{ij}$ gives 
\[
\alpha_{ij}-(N-2-\alpha_{ij})=-2.
\]
This together with \eqref{eq:partition} yields
\[
\alpha_{ij}=\frac{N-4}{2}=\frac{2n-2-4}{2}=n-3,\quad\beta_{ij}=N-2-\alpha_{ij}=n-1.
\]
Thus, from the definitions of $\alpha_{ij}$ and $\beta_{ij}$, every edge of $G$ has exactly $n-3$ common neighbors in $G$, implying $B_{n-2}\nsubseteq G$; and in $\overline G$, every non-clone edge has exactly $n-1$ common neighbors, whereas clone edges have none, hence $B_{n}\nsubseteq\overline G$.

Thus we have constructed a graph on $4n-4$ vertices such that $G$ contains no $B_{n-2}$ and $\overline G$ contains no $B_n$. Consequently,
$
R(B_{n-2},B_n)\ge 4n-3.
$
Together with Theorem~\ref{thm:near-exact}, this gives $R(B_{n-2},B_n)=4n-3$ and completes the proof of Lemma~\ref{cor:near-hadamard-exact}.
\end{pf}

We now turn to Corollary~\ref{cor:hadamard-exact}. We first record the elementary Hadamard matrices used in its proof.

\begin{lemma}\label{lem:binary-hadamard}
For every integer $\ell\ge 1$, there exists a symmetric Hadamard matrix $H$ of order $2^{2\ell}$ whose diagonal entries are all equal to $1$.
\end{lemma}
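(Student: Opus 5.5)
The plan is to exhibit an explicit symmetric Hadamard matrix of order $4$ with all diagonal entries equal to $1$, and then take Kronecker powers. The base matrix I would use is
\[
H_4=\begin{pmatrix} 1&1&1&-1\\ 1&1&-1&1\\ 1&-1&1&1\\ -1&1&1&1\end{pmatrix}=J-2P,
\]
where $P$ is the permutation matrix of the anti-diagonal. It is visibly symmetric, with ones on the diagonal and entries in $\{\pm1\}$. Orthogonality follows from $P^2=I$, $PJ=JP=J$ and $J^2=4J$:
\[
H_4H_4^{\mathsf T}=(J-2P)^2=J^2-4J+4I=4I.
\]
Alternatively, one can check directly that any two distinct rows agree in exactly two coordinates.

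For general $\ell$ I would set $H=H_4^{\otimes \ell}$, of order $4^\ell=2^{2\ell}$. Three properties are needed.

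\begin{itemize}
\item Entries: all entries are products of $\pm1$'s, hence lie in $\{\pm1\}$.
\item Symmetry: $(A\otimes B)^{\mathsf T}=A^{\mathsf T}\otimes B^{\mathsf T}$ gives $H^{\mathsf T}=H$.
\item Orthogonality: the mixed-product rule $(A\otimes B)(C\otimes D)=AC\otimes BD$ gives $HH^{\mathsf T}=(4I_4)^{\otimes\ell}=4^\ell I$.
\end{itemize}

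For the diagonal, index rows and columns by tuples $(i_1,\dots,i_\ell)$. Then the diagonal entry of $H$ at that tuple is $\prod_{r}(H_4)_{i_ri_r}=1$. An induction on $\ell$ using $H_{\ell+1}=H_\ell\otimes H_4$ packages all of this cleanly.

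There is no real obstacle. The only point needing care is the choice of base matrix: the Sylvester matrix $\begin{pmatrix}1&1\\1&-1\end{pmatrix}$ has a $-1$ on the diagonal, and no $2\times 2$ symmetric Hadamard matrix has unit diagonal. That is exactly why the construction starts from order $4$ and only reaches even powers of $2$.
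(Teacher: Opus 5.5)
Your proposal is correct and follows the paper's proof essentially verbatim: the same base matrix $H_4$, followed by the $\ell$-fold Kronecker power, with symmetry, orthogonality and the unit diagonal obtained from the transpose and mixed-product identities. Your identity $H_4=J-2P$ and your remark that no $2\times 2$ symmetric Hadamard matrix has unit diagonal are correct additions that the paper does not spell out.
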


\begin{pf}
~We use a Kronecker-product construction. Let
\[
H_4=
\begin{pmatrix}
1&1&1&-1\\
1&1&-1&1\\
1&-1&1&1\\
-1&1&1&1
\end{pmatrix}.
\]
A direct computation gives
\[
H_4=H_4^{\mathsf T},
\quad
(H_4)_{ii}=1\quad (1\le i\le 4),
\quad
H_4H_4^{\mathsf T}=4I_4.
\]
Thus $H_4$ is a symmetric Hadamard matrix of order $4$ with diagonal entries all equal to $1$.

For $\ell\ge 1$, define
\[
H=\underbrace{H_4\otimes H_4\otimes\cdots\otimes H_4}_{\ell\text{ times}}.
\]
Then $H$ has order $4^\ell=2^{2\ell}$.

We recall the standard identities for Kronecker products:
\[
(A\otimes B)^{\mathsf T}=A^{\mathsf T}\otimes B^{\mathsf T},
\quad
(A\otimes B)(C\otimes D)=(AC)\otimes(BD).
\]
Since $H_4$ is symmetric, repeated application of the first identity gives
$ H^{\mathsf T}=H.$ Moreover, by the definition of Kronecker products, all diagonal entries of $H$ are equal to $1$. Finally, using the second identity, we get
\[
HH^{\mathsf T}
=
(H_4H_4^{\mathsf T})\otimes\cdots\otimes(H_4H_4^{\mathsf T})
=
(4I_4)\otimes\cdots\otimes(4I_4)
=
4^\ell I_{4^\ell}
=
2^{2\ell}I_{2^{2\ell}}.
\]
Hence $H$ is a symmetric Hadamard matrix of order $2^{2\ell}$ with diagonal entries all equal to $1$.
\end{pf}


\begin{pf}[Proof of Corollary~\ref{cor:hadamard-exact}]
Since $n=2^{2\ell-1}+1 \neq 6$, Theorem~\ref{thm:near-exact} gives the upper bound $R(B_{n-2},B_n) \le 4n-3$.
By Lemma~\ref{lem:binary-hadamard}, there exists a symmetric Hadamard matrix of order $2^{2\ell}$ with all diagonal entries equal to $1$. Setting $2n-2 = 2^{2\ell}$ yields $n = 2^{2\ell-1}+1$, so Lemma~\ref{cor:near-hadamard-exact} provides the matching lower bound $R(B_{n-2},B_n) \ge 4n-3$.
Therefore $R(B_{n-2},B_n) = 4n-3$. 
This completes the proof.
\end{pf}

\end{spacing}

\end{document}